\documentclass[11pt]{article}

\usepackage[margin=1in]{geometry}
\usepackage[T1]{fontenc}
\usepackage[utf8]{inputenc}
\usepackage{amsmath,amssymb,amsfonts,amsthm}
\usepackage{graphicx}
\usepackage{hyperref}
\usepackage{enumitem}

\newcommand{\keywords}[1]{\par\medskip\noindent\textbf{Keywords: }#1}
\newcommand{\msc}[1]{\par\noindent\textbf{MSC 2020: }#1}

\title{A law of thin processes with neighbour-count thinning}

\author{Kateryna Hlyniana\\
\small School of Mathematics, Jilin University, Changchun 130061, China\\
\small Institute of Mathematics, National Academy of Sciences of Ukraine, Kyiv 01024, Ukraine\\
\small \texttt{glinkate@gmail.com}}
\date{}

\newtheorem{theorem}{Theorem}
\newtheorem{lemma}{Lemma}

\theoremstyle{remark}
\newtheorem{remark}{Remark}

\begin{document}
\maketitle
\begin{abstract}
We consider the superposition $S_n$ of $n$ i.i.d.\ simple point processes on $\mathbb{R}^d$ and apply a dependent thinning $T_n$, where the retention probability of a point depends on its local neighbour count within a radius $r_n$. While superpositions under independent thinning converge to Poisson processes, we show that under a \emph{critical geometric scaling} $n v_d r_n^d \to \tau \in (0,\infty)$,  the local interactions are transformed in the limit into an inhomogeneous Poisson point process with modified intensity. We prove that the thinned sequence $T_n(S_n)$ converges to a Poisson process with a non-linearly modified intensity $\tilde\lambda(x) = \lambda(x)\alpha(\tau\lambda(x))$.
\end{abstract}
\keywords{point process; dependent thinning; neighbour count; Poisson limit }
\msc{60G55; 60F05}

\section*{1.\ Introduction}

The convergence of superpositions of independent sparse point processes to a Poisson limit is a classical result in stochastic geometry \cite{Grigelionis1963Poisson}. Recently, Aldridge \cite{Aldridge2026ThinProcesses} unified these results under the ``Law of Thin Processes,'' providing a transparent proof that the superposition of $n$ i.i.d.\ processes, thinned independently by $1/n$, converges to a Poisson process.  Aldridge's approach highlights the usefulness of working with the probability generating functional evaluated at $v=1-u$, which is particularly convenient for thinning arguments.

However, many physical and biological systems exhibit retention mechanisms that are explicitly interaction-dependent. For example, in forestry or cellular biology, points may survive only when the local neighbourhood is not too crowded.   In such situations, the thinning probability is no longer a constant, but depends on the local configuration of points. In our setting, a point with $k$ neighbours within distance $r_n$ is retained with probability $q_n(k)$. We shall work in the regime
\(
 q_n(k)=\frac1n a(k),
\)
 where the factor $1/n$ gives the rare-thinning scale and the bounded function $a(\cdot)$ records how retention depends on the neighbour count. Thus the model reduces to the classical law of thin processes only in the special case where $a(\cdot)\equiv 1$.

In this note, we study the superposition $S_n$ of $n$ i.i.d.\ copies of a point process $X$ under this neighbour-count thinning. The relevant local quantity is the number of points within distance $r_n$ of a typical point. Under the critical scaling
\(
n v_d r_n^d \;\longrightarrow\; \tau \in (0,\infty),
\)
 this local count remains of order one and is asymptotically Poisson with mean depending on $\tau$ and the local intensity $\lambda(x)$. The limiting retention at location $x$ is therefore obtained by averaging the rule $a(\cdot)$ against this Poisson law; this Poisson average is encoded later by the function $\alpha$. If $r_n$ decays faster than this rate ($\tau=0$), the local neighbourhoods become empty and the dependence disappears at leading order. If $r_n$ decays slower ($\tau=\infty$), the local counts diverge. It is only in the critical regime  that the microscopic local interactions  are transformed in the scaling limit into an inhomogeneous Poisson field with a modified intensity $\tilde\lambda(x)$.

 Dependent thinning of point processes is classical in spatial statistics, for example in Mat\'ern-type hard-core models and their generalizations \cite{TeichmannBallaniBoogaart2013}. Serfozo \cite{Serfozo1984} proved convergence results for sums of thinned point processes under a general sparsity framework, while Schuhmacher \cite{Schuhmacher2009} derived quantitative Poisson approximation bounds for dependent thinnings. In contrast, we consider a superposition of i.i.d.\ point processes under local neighbour-count thinning with shrinking interaction radius, and obtain the explicit limiting intensity.
To the best of our knowledge, such an explicit limit theorem is not available in the existing literature.

\section*{2.\ Setup and Main Result}

Let $X$ be a simple point process on $\mathbb{R}^d$, viewed as a random counting measure (see, e.g., \cite[Chs.~1--3]{LastPenrose2018} for this viewpoint and basic notation).
Thus for a Borel set $B\subset\mathbb{R}^d$,
\(
X(B):=\#(X\cap B)
\)
denotes the number of points of $X$ in $B$.
Assume that $X$ has intensity measure $\mu$, absolutely continuous with respect to Lebesgue measure,
\(
\mu(B):=\mathbb{E}X(B)=\int_B \lambda(x)\,dx,
\)
where $\lambda:\mathbb{R}^d\to[0,\infty)$ is locally bounded (and later assumed continuous).  The absolute continuity assumption is used to express the local scaling in terms of Euclidean ball volumes and to obtain the explicit pointwise form of the limiting intensity. A more general measure-theoretic formulation may be possible, but is not pursued in this short note.
Write $B(x,r):=\{y\in\mathbb{R}^d:\|y-x\|\le r\}$ for the closed ball of radius $r$ centred at $x$,
and $v_d:=|B(0,1)|$ for the volume of the unit ball in $\mathbb{R}^d$.

For a locally finite (deterministic) configuration $\varphi\subset\mathbb{R}^d$ and $x\in\varphi$, define
the neighbour count within radius $r>0$ by
\[
N_r(x,\varphi):=\#\big(\varphi\cap B(x,r)\setminus\{x\}\big).
\]

Given a function $q:\mathbb{N}_0\to[0,1]$ and a radius $r>0$, define the \emph{neighbour-count thinning}
$T(q,r;\varphi)$ as follows: conditional on $\varphi$, retain each point $x\in\varphi$ {independently}
with probability $q(N_r(x,\varphi))$.

Let $X_1,X_2,\dots$ be i.i.d.\ copies of $X$ and set $S_n:=X_1+\cdots+X_n$.
Fix sequences $(r_n)_{n\ge1}$ with $r_n\downarrow0$ and $(q_n)_{n\ge1}$ of retention rules, and define
\[
Y_n := T_n(S_n) := T(q_n,r_n;S_n).
\]

For $u\in\mathcal V$ (measurable $u:\mathbb R^d\to[0,1]$ with bounded support),  define
\[
A_\xi(u):=\mathbb E\prod_{x\in\xi}(1-u(x)).
\]
 Following \cite{Aldridge2026ThinProcesses}, we shall refer to $A_\xi$ as the \emph{alternate}
probability generating functional in this paper. It is simply the usual probability generating
functional evaluated at $1-u$; for background on the usual p.g.fl.\ see
\cite{DaleyVereJones2008Vol2,LastPenrose2018}.

\paragraph{Second-order regularity  assumption}.
We assume that $X$ has a locally bounded second-order product density $\rho^{(2)}(x,y)$ off the diagonal,
i.e.\ for bounded Borel sets $B_1,B_2$,
\[
\mathbb{E}\sum_{x\neq y\in X}\mathbf{1}_{B_1}(x)\mathbf{1}_{B_2}(y)
=\int_{B_1}\int_{B_2}\rho^{(2)}(x,y)\,dx\,dy,
\]
with $\sup_{(x,y)\in K^2,\,x\neq y}\rho^{(2)}(x,y)<\infty$ for each compact $K\subset\mathbb{R}^d$.

Here and below, for $m\ge1$ and $k\in\mathbb N_0$ we write
$(k)_m:=k(k-1)\cdots(k-m+1)$ for the falling factorial, in particular $(k)_2=k(k-1)$.
For a Borel set $B$, $(X(B))_2:=X(B)(X(B)-1)$, so $\mathbb E[(X(B))_2]$ is the second factorial moment
of the count $X(B)$. Note that our assumption on $\rho^{(2)}$ implies the usual small-ball bound
\begin{equation}
\label{eq:smallball}
\sup_{x\in K}\mathbb{E}\big(X(B(x,r))\big)_2
=\sup_{x\in K}\int_{B(x,r)}\int_{B(x,r)}\rho^{(2)}(y,z)\,dy\,dz
=O(r^{2d})
\quad(r\downarrow0),
\end{equation}
uniformly on compact  sets $K$.

We work in the ``rare retention'' regime
\begin{equation}
\label{eq:rare}
q_n(k)=\frac{1}{n}a(k),\qquad k\in\mathbb{N}_0,
\end{equation}
for some bounded function $a:\mathbb{N}_0\to[0,\infty)$ (so $\sup_k a(k)=:a_\ast<\infty$), and in the
local scaling
\begin{equation}
\label{eq:crit}
n v_d r_n^d \longrightarrow \tau\in[0,\infty).
\end{equation}

Define, for $t\ge0$,
\[
\alpha(t):=\mathbb{E}\big[a(\mathrm{Poi}(t))\big],
\]
and the candidate limiting intensity density
\(
\tilde\lambda(x) := \lambda(x)\,\alpha \big(\tau\lambda(x)\big).
\)

\begin{theorem}[Poisson limit for neighbour-count thinnings of superpositions]
\label{thm:main}
 Assume that $X$ is simple, satisfies the second-order regularity assumption above,
and has continuous strictly positive intensity density $\lambda$.  Let $Y_n=T(q_n,r_n;S_n)$ with $q_n$ and $r_n$ satisfying \eqref{eq:rare}--\eqref{eq:crit}.
Then $Y_n$ converges weakly, as $n\to\infty$, to an inhomogeneous Poisson point process on $\mathbb{R}^d$
with intensity measure $\tilde\lambda(x)\,dx$.
Equivalently, for every $u\in\mathcal V$,
\[
A_{Y_n}(u)\;\longrightarrow\;\exp \Big(-\int_{\mathbb{R}^d}u(x)\,\tilde\lambda(x)\,dx\Big).
\]
\end{theorem}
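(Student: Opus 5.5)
Conditionally on $S_n$, the thinning is independent, so
\[
A_{Y_n}(u)=\mathbb E\prod_{x\in S_n}\Big(1-\tfrac1n a\big(N_{r_n}(x,S_n)\big)u(x)\Big).
\]
The plan is to show that this product is asymptotically $\exp(-\Lambda_n)$, where
\[
\Lambda_n:=\frac1n\sum_{x\in S_n}a\big(N_{r_n}(x,S_n)\big)u(x).
\]
We then show that $\Lambda_n\to\int u\tilde\lambda$ in probability (in fact in $L^2$). Bounded convergence then gives the claim.

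\textbf{Step 1: replacing the product by an exponential.} Since $0\le \frac1n a u\le a_\ast/n$, the elementary bound $|\log(1-s)+s|\le s^2$ for small $s$ gives
\[
\Big|\log\prod-(-\Lambda_n)\Big|\le \frac{a_\ast^2}{n^2}\,S_n(K),
\]
where $K=\operatorname{supp}u$. Its expectation is $a_\ast^2\mu(K)/n\to0$. Since all quantities lie in $[0,1]$, it therefore suffices to show $\mathbb E e^{-\Lambda_n}\to e^{-\int u\tilde\lambda}$.

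\textbf{Step 2: the first moment of $\Lambda_n$.} A point $x$ of $S_n$ lies in exactly one copy $X_i$. I decompose
\[
N_{r_n}(x,S_n)=N_{r_n}(x,X_i)+\sum_{j\ne i}X_j(B(x,r_n)).
\]
By the Mecke/Campbell formula for $X_i$ and independence of the other copies,
\[
\mathbb E\Lambda_n=\int u(x)\,\mathbb E\Big[a\big(N^{(x)}_{\mathrm{own}}+Z_n(x)\big)\Big]\lambda(x)\,dx .
\]
Here $Z_n(x)=\sum_{j=1}^{n-1}X_j(B(x,r_n))$ is a sum of $n-1$ i.i.d.\ counts, independent of the term $N^{(x)}_{\mathrm{own}}$ (whose law comes from the Palm distribution, weighted by $\lambda$).

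The own-copy term vanishes with high probability. Its Palm expectation is $\int_{B(x,r_n)}\rho^{(2)}(x,y)\,dy/\lambda(x)=O(r_n^d)$, locally uniformly, using the bound on $\rho^{(2)}$. So it changes $a$ only on an event of probability $O(r_n^d)\to0$.

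For $Z_n(x)$, the sum of $n-1$ i.i.d.\ counts, I use a Poisson approximation for sums of independent nonnegative integer variables (Le Cam type, with a second-factorial-moment correction). Writing $p_n=\mathbb E X(B(x,r_n))=\lambda(x)v_dr_n^d(1+o(1))$ by continuity of $\lambda$, the total variation distance between $\mathcal L(Z_n(x))$ and $\mathrm{Poi}((n-1)p_n)$ is bounded by
\[
n\,\mathbb P\big(X(B)\ge2\big)+n\,p_n^2\le n\,\mathbb E(X(B))_2+np_n^2=O(nr_n^{2d})\to0,
\]
using \eqref{eq:smallball} and \eqref{eq:crit}. This holds uniformly on $K$. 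Since $(n-1)p_n\to\tau\lambda(x)$, $a$ is bounded, and $t\mapsto\alpha(t)$ is continuous, dominated convergence gives $\mathbb E\Lambda_n\to\int u\lambda\,\alpha(\tau\lambda)=\int u\tilde\lambda$.

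\textbf{Step 3: concentration, the main obstacle.} I need $\operatorname{Var}\Lambda_n\to0$. Expand $\mathbb E\Lambda_n^2$ as a diagonal term plus a sum over pairs of distinct points.

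The diagonal term is at most $a_\ast^2\mu(K)/n\to0$.

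Pairs with both points in the same copy $X_i$ contribute at most
\[
\frac{a_\ast^2}{n^2}\cdot n\int_{K^2}\rho^{(2)}=O(1/n).
\]

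Pairs $x\in X_i$, $y\in X_j$ with $i\ne j$ carry the main term. By the bivariate Mecke formula, their contribution is
\[
\frac{n(n-1)}{n^2}\iint u(x)u(y)\lambda(x)\lambda(y)\,\mathbb E\big[a(N_x)\,a(N_y)\big]\,dx\,dy .
\]
When $\|x-y\|>2r_n$, the two neighbourhood counts are built from disjoint balls. The remaining $n-2$ copies contribute via $\big(X_k(B_x),X_k(B_y)\big)$ vectors. These are independent across $k$, and within a copy the two components are nearly independent. By the same total-variation argument, applied to the multivariate Poisson approximation of the pair, $\mathbb E[a(N_x)a(N_y)]=\alpha(\tau\lambda(x))\,\alpha(\tau\lambda(y))+o(1)$. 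The cross-dependence is controlled by $n\,\mathbb P\big(X(B_x)\ge1,\,X(B_y)\ge1\big)\le n\iint_{B_x\times B_y}\rho^{(2)}=O(nr_n^{2d})$.

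The set $\|x-y\|\le2r_n$ has Lebesgue measure $O(r_n^d)\to0$, so it contributes $o(1)$ because $a$ is bounded. Also, the counts coming from the copies containing $x$ and $y$ vanish with high probability, as in Step 2.

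Hence $\mathbb E\Lambda_n^2\to\big(\int u\tilde\lambda\big)^2$, so $\Lambda_n\to\int u\tilde\lambda$ in $L^2$, and the theorem follows. Convergence of $A_{Y_n}(u)$ for all $u\in\mathcal V$ identifies the Poisson limit and gives weak convergence. The delicate bookkeeping is the uniformity of the two-point Poisson approximation, which I would handle by a single coupling bound of order $O(nr_n^{2d}+r_n^d)$ applied to all points of $K$.
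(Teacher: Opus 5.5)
Your proposal is correct. Steps 1 and 2 match the paper's Lemmas 1, 2 and 4 in substance: the same log-linearisation, the same reduced Campbell--Mecke representation, and the same bound $O(r_n^d)$ on the own-copy Palm count.

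The one local difference is in proving $\bar W_n(x)\Rightarrow\mathrm{Poi}(\tau\lambda(x))$. The paper expands $\mathbb E(1-t)^{N_n}$ to second order and raises it to the power $n-1$, which gives only pointwise weak convergence; that suffices under dominated convergence. Your Le Cam-type total-variation bound is quantitative and uniform on $K$. Strictly, it also needs the mean-shift term $(n-1)\big(p_n-\mathbb P(X(B)\ge1)\big)\le \tfrac{n}{2}\mathbb E(X(B))_2$, but your final bound $n\mathbb E(X(B))_2+np_n^2$ already covers it.

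The genuinely different route is the concentration step. The paper never computes $\mathbb E Z_n(u)^2$. It applies Efron--Stein together with locality, which gives the deterministic bound
\[
|Z_n(u)-Z_n^{(1)}|\le \tfrac{C}{n}\big(X_1(K)+X_1'(K)+S_{-1}(D_n)\big),\qquad D_n=K\cap(X_1\cup X_1')^{+r_n}.
\]
It then needs only one-point moment bounds, $\mathbb E X(B)^2\le M(|B|+|B|^2)$ and $|D_n|\le v_d r_n^d\big(X_1(K^{+1})+X_1'(K^{+1})\big)$, to get $\mathrm{Var}(Z_n(u))=O(1/n)$.

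That route has three advantages over yours:
\begin{itemize}
\item it uses only $\sup_n nr_n^d<\infty$, not the limit $\tau$ or continuity of $\lambda$;
\item it gives an explicit rate;
\item it avoids the two-point Palm calculus, the bivariate Poisson approximation, the decorrelation estimate $n\iint_{B_x\times B_y}\rho^{(2)}=O(nr_n^{2d})$, and the near-diagonal bookkeeping your Step 3 needs.
\end{itemize}

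Your second-moment argument is heavier but valid. The bivariate Mecke formula for $x\in X_i$, $y\in X_j$ with $i\neq j$ works as you describe. Pointwise convergence for $x\neq y$ plus dominated convergence on $K^2$ suffices, so the uniform coupling bound you flag as delicate is not actually needed. In return, your argument identifies the limit of $\mathbb E Z_n(u)^2$, i.e.\ asymptotic independence of the retention weights at distinct locations. That is a natural starting point for a factorial-moment proof of the Poisson limit or for extensions to joint statistics.
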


\begin{remark}
If $a(\cdot)\equiv 1$, then $q_n\equiv 1/n$ and $\alpha(\cdot)\equiv 1$, hence $\tilde\lambda=\lambda$ and
the limit reduces to the Poisson limit in \cite{Aldridge2026ThinProcesses}. 
If $\tau=0$ (e.g.\ $r_n=o(n^{-1/d})$), then $\alpha(\tau\lambda(x))=\alpha(0)=a(0)$ and the neighbour
dependence disappears at leading order.
\end{remark}
 The effect of the transformation $\tilde\lambda(x)=\lambda(x)\alpha(\tau\lambda(x))$ is illustrated by concrete examples in Section~4 below.
\section*{3.\ Proofs}
 At a high level, the proof shows that the alternate p.g.fl.\ of $Y_n$ is asymptotically governed by a single linear statistic. Lemma~\ref{lem:loglin} is the starting point: it gives a log-linear representation of $A_{Y_n}(u)$ and bounds the quadratic remainder. To understand the leading term in that representation, one needs the asymptotic behaviour of the local neighbour count entering the thinning rule; this is provided by Lemma~\ref{lem:localpois}, which shows that the contribution from the other components of the superposition is asymptotically Poisson under the critical scaling. Lemma~\ref{lem:var} then shows that the resulting linear term concentrates around its mean, so that the problem reduces to identifying that mean. Lemma~\ref{lem:mean} computes the limit of the linear part, using the Poissonisation from Lemma~\ref{lem:localpois} together with reduced Palm calculus. Finally, the proof of Theorem~\ref{thm:main} combines the log-linear representation from Lemma~\ref{lem:loglin}, the concentration from Lemma~\ref{lem:var}, and the mean limit from Lemma~\ref{lem:mean} to obtain convergence of the alternate p.g.fl., and hence the Poisson limit.

Fix $u\in\mathcal V$. Throughout, we write $K:=\mathrm{supp}(u)$ (a compact set) and for any set $B$ we define
$B^{+r}:=\{y\in\mathbb R^d:\mathrm{dist}(y,B)\le r\}$.
Note that $u(x)=0$ outside $K$, while the neighbour count $N_{r_n}(x,S_n)$ for $x\in K$
depends only on the restriction of $S_n$ to $K^{+r_n}$.

Recall that conditional on $S_n$, the thinning is independent and each $x\in S_n$ is retained
with probability $p_n(x,S_n)$. In our model
\[
p_n(x,S_n)=\frac 1 n \ a\big(N_{r_n}(x,S_n)\big), x\in S_n.
\]
For $n\ge a_\ast=\sup_{k\in\mathbb N_0} a(k)$, $p_n(x,S_n)\in[0,1]$ for all $x\in S_n$.

The next lemma   provides both the remainder representation
and a \emph{two-sided exponential bound} that will be used in the final step.

\begin{lemma}[Log-linearisation]
\label{lem:loglin}
Let $Y_n$ be obtained from $S_n$ by conditional independent thinning with retention probabilities
$p_n(x,S_n)\in[0,1]$. For $u\in\mathcal V$, set
\[
z_n(x):=u(x)p_n(x,S_n),\qquad
Z_n(u):=\sum_{x\in S_n} z_n(x),\qquad
Q_n(u):=\sum_{x\in S_n} z_n(x)^2.
\]
Then for every $n$ such that $n>2a_\ast:= 2\sup_k a(k)$,
\(
A_{Y_n}(u)
=\mathbb E\exp\{-Z_n(u)+R_n(u)\},\) \(|R_n(u)|\le 2Q_n(u),\)
and moreover
\(\mathbb E e^{-Z_n(u)-2Q_n(u)}
\le A_{Y_n}(u)\le \mathbb E e^{-Z_n(u)}.\)

\end{lemma}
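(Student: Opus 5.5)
The plan is to condition on $S_n$ and use conditional independence of the thinning. Given $S_n$, each $x\in S_n$ is retained independently with probability $p_n(x,S_n)$, and retained points contribute a factor $1-u(x)$ to the product. Hence the conditional expectation factorises:
\[
\mathbb E\Big[\prod_{y\in Y_n}(1-u(y))\,\Big|\,S_n\Big]=\prod_{x\in S_n}\big(1-p_n(x,S_n)u(x)\big)=\prod_{x\in S_n}(1-z_n(x)).
\]
Only finitely many factors differ from $1$, because $u$ has bounded support and $S_n$ is locally finite a.s. Taking expectations gives $A_{Y_n}(u)=\mathbb E\prod_{x\in S_n}(1-z_n(x))$, and the rest of the proof is a deterministic estimate on this product.

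For $n>2a_\ast$ we have $0\le z_n(x)\le a_\ast/n<1/2$. I would therefore write $\prod(1-z_n(x))=\exp\{\sum\log(1-z_n(x))\}$ and set
\[
R_n(u):=\sum_{x\in S_n}\big(\log(1-z_n(x))+z_n(x)\big).
\]
The key scalar inequality needed is
\[
-2z^2\le \log(1-z)+z\le 0\qquad\text{for } z\in[0,1/2].
\]
The upper bound is the standard $\log(1-z)\le -z$. For the lower bound, use the series $-\log(1-z)-z=\sum_{k\ge2}z^k/k\le \tfrac{z^2}{2}\sum_{j\ge0}z^j\le \tfrac{z^2}{2}\cdot 2=z^2\le 2z^2$. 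Alternatively, note that $g(z)=\log(1-z)+z+2z^2$ has $g(0)=0$ and $g'(z)=z(3-4z)/(1-z)\ge0$ on $[0,1/2]$. Summing over $x\in S_n$ gives $-2Q_n(u)\le R_n(u)\le 0$, so $|R_n(u)|\le 2Q_n(u)$, and hence $A_{Y_n}(u)=\mathbb E\exp\{-Z_n(u)+R_n(u)\}$.

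The two-sided bound follows from monotonicity of the exponential: $-Z_n-2Q_n\le -Z_n+R_n\le -Z_n$ pointwise, and taking expectations gives the claimed inequalities. All the random variables involved are finite a.s. and the integrands lie in $[0,1]$, so no integrability issues arise. Nothing here is a real obstacle. The only points needing care are the conditioning step, which requires measurability of the thinning kernel as a function of $S_n$, and checking the constant $2$ in the elementary logarithm inequality on $[0,1/2]$; this constant is exactly why the hypothesis $n>2a_\ast$ is imposed.
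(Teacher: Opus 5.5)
Your proposal is correct and follows the paper's proof essentially verbatim: you condition on $S_n$ to factorise $A_{Y_n}(u\mid S_n)=\prod_{x\in S_n}(1-z_n(x))$, apply $-z-2z^2\le\log(1-z)\le -z$ on $[0,1/2]$ (which you additionally verify, obtaining the slightly sharper $-2Q_n(u)\le R_n(u)\le 0$), then exponentiate and take expectations. As in the paper, you use $n>2a_\ast$ only through $z_n(x)\le a_\ast/n<1/2$, which relies on the specific model $p_n=a(N_{r_n})/n$ and on $u\le 1$, rather than on the general hypothesis $p_n\in[0,1]$.
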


\begin{proof}
Conditional on $S_n$, independence of thinning yields
\[
A_{Y_n}(u\mid S_n)=\mathbb E\left(\prod_{x\in Y_n}(1-u(x))\mid S_n\right)=\prod_{x\in S_n}(1-z_n(x)),
\quad z_n(x)=u(x)p_n(x,S_n)\in[0,1].
\]
For $z\in[0,1/2]$ we have the standard bounds
\(
-z-2z^2 \le \log(1-z)\le -z,\)
 equivalently \(
e^{-z-2z^2}\le 1-z\le e^{-z}.
\)
Under the condition  $n>2a_\ast $  we have $z_n(x)<1/2,$ so 
summing the logarithmic inequality over $x\in S_n$ gives
\[
\log A_{Y_n}(u\mid S_n)=\sum_{x\in S_n}\log(1-z_n(x))
= -\sum_{x\in S_n}z_n(x) + R_n(u),
\]
with $|R_n(u)|\le 2\sum_{x\in S_n}z_n(x)^2=2Q_n(u)$.
Exponentiating  and taking expectations yields the representation
$A_{Y_n}(u)=\mathbb E\exp\{-Z_n(u)+R_n(u)\}$ with the stated bound on $R_n$.
Exponentiating the two-sided bounds on $1-z$ yields
\[
\exp\{-Z_n(u)-2Q_n(u)\}\le A_{Y_n}(u\mid S_n)\le \exp\{-Z_n(u)\}.
\]
Taking expectations completes the proof.
\end{proof}

\begin{lemma}[Local Poissonisation]
\label{lem:localpois}
Fix $x\in\mathbb R^d$ and define
\[
\bar W_n(x):=\sum_{i=2}^n X_i(B(x,r_n)).
\]
Under the   second-order regularity assumption above, continuity of $\lambda$, and the scaling \eqref{eq:crit},
\[
\bar W_n(x)\Rightarrow \mathrm{Poi}\big(\tau\lambda(x)\big), \quad n\to \infty.
\]
\end{lemma}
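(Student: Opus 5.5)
The plan is to treat $\bar W_n(x)$ as a sum of $n-1$ i.i.d.\ nonnegative integer variables $\xi_{i,n}:=X_i(B(x,r_n))$. We then apply the classical Poisson limit theorem for row-wise independent triangular arrays, verified through generating functions or a Le Cam-type total variation bound. Write $p_n:=\mathbb P(X(B(x,r_n))\ge1)$ and $m_n:=\mathbb E X(B(x,r_n))=\int_{B(x,r_n)}\lambda(y)\,dy$. It suffices to show
\[
(n-1)\,\mathbb P(\xi_{i,n}=1)\to\tau\lambda(x),\qquad (n-1)\,\mathbb P(\xi_{i,n}\ge2)\to0 .
\]
Given these, the sum of Bernoulli indicators $\mathbf 1\{\xi_{i,n}\ge1\}$ converges to $\mathrm{Poi}(\tau\lambda(x))$ by Le Cam's inequality, since $\max_i \mathbb P(\xi_{i,n}\ge1)\to0$. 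Moreover, $\mathbb P(\bar W_n(x)\ne \sum_i\mathbf 1\{\xi_{i,n}\ge1\})\le (n-1)\mathbb P(\xi_{1,n}\ge2)\to0$, so the two sums share the same limit.

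\emph{Step 1: the mean.} By continuity of $\lambda$ at $x$, $m_n=\lambda(x)v_dr_n^d(1+o(1))$. Combined with \eqref{eq:crit}, this gives $(n-1)m_n\to\tau\lambda(x)$. When $\tau=0$ the limit is simply $0$, and the argument below goes through unchanged with the degenerate limit $\mathrm{Poi}(0)=\delta_0$.

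\emph{Step 2: multiple points.} For integer $k\ge0$ we have $\mathbf 1\{k\ge2\}\le (k)_2/2$. Hence, by the small-ball bound \eqref{eq:smallball} applied with $K=\{x\}$,
\[
\mathbb P(\xi_{1,n}\ge2)\le\tfrac12\mathbb E(X(B(x,r_n)))_2=O(r_n^{2d}).
\]
Therefore $(n-1)\mathbb P(\xi_{1,n}\ge2)=O(nr_n^{2d})=O(\tau^2/n)\to0$, because $r_n^d=O(1/n)$.

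\emph{Step 3: single points.} We have $m_n-\mathbb P(\xi\ge1)=\mathbb E[(\xi-1)_+]\le\mathbb E(\xi)_2$, using $(k-1)_+\le k(k-1)$ for integers $k$. By Step 2 this gives $(n-1)(m_n-p_n)\to0$, so $(n-1)p_n\to\tau\lambda(x)$. Also $\mathbb P(\xi=1)=p_n-\mathbb P(\xi\ge2)$, and Step 2 again gives $(n-1)\mathbb P(\xi=1)\to\tau\lambda(x)$.

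The main obstacle is Step 3: controlling the gap between the first moment and the probability of at least one point. This is exactly where the second-order regularity assumption, rather than mere first-moment information, is needed. The inequality $(k-1)_+\le (k)_2$ reduces it cleanly to the factorial moment bound, and that bound is already available in \eqref{eq:smallball}. For the final conclusion I would quote Le Cam's bound $d_{TV}\big(\sum_i \mathbf 1\{\xi_{i,n}\ge1\},\mathrm{Poi}((n-1)p_n)\big)\le (n-1)p_n^2\to0$ together with $\mathrm{Poi}((n-1)p_n)\Rightarrow\mathrm{Poi}(\tau\lambda(x))$.
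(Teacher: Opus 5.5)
Your argument is correct, but it takes a different route from the paper. The paper works with the alternate p.g.f.\ $\phi_n(t)=\mathbb E(1-t)^{N_n}$. It uses the Taylor bound $0\le(1-t)^k-(1-kt)\le\frac{t^2}{2}(k)_2$ to get $\phi_n(t)=1-t\,m_n+O(r_n^{2d})$ uniformly in $t\in[0,1]$, and then lets $n\to\infty$ in $\phi_n(t)^{n-1}$, relying on the continuity theorem for generating functions. You instead couple $\bar W_n(x)$ with the sum of the i.i.d.\ indicators $\mathbf 1\{\xi_{i,n}\ge1\}$, at cost $(n-1)\mathbb P(\xi_{1,n}\ge2)$, and then apply Le Cam. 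Since that sum is exactly $\mathrm{Bin}(n-1,p_n)$, the classical binomial-to-Poisson limit would already suffice there.

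The analytic inputs are the same in both proofs: the first-moment asymptotics from continuity of $\lambda$, and the bound $\mathbb E(X(B(x,r_n)))_2=O(r_n^{2d})$. In fact your Step~3 inequality $m_n-p_n\le\mathbb E(\xi)_2$ is the paper's Taylor bound evaluated at $t=1$, which even gives the constant $\tfrac12$. Your route yields an explicit error estimate,
\[
d_{TV}\big(\bar W_n(x),\mathrm{Poi}((n-1)p_n)\big)\le(n-1)\mathbb P(\xi_{1,n}\ge2)+(n-1)p_n^2=O(1/n),
\]
leaving only $|(n-1)p_n-\tau\lambda(x)|$ to control. The paper's route is shorter and stays inside the generating-functional framework used for Lemma~\ref{lem:loglin} and the main theorem.

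Two cosmetic fixes. Write $m_n=\lambda(x)v_dr_n^d+o(r_n^d)$ rather than $\lambda(x)v_dr_n^d(1+o(1))$, since the lemma as stated assumes only continuity of $\lambda$ and does not exclude $\lambda(x)=0$. Also, $(n-1)\mathbb P(\xi_{1,n}\ge2)=O(nr_n^{2d})=O(1/n)$, using only boundedness of $nr_n^d$, is a cleaner statement than $O(\tau^2/n)$.
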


\begin{proof}
Write $N_n:=X(B(x,r_n))$ for a single copy $X$.
Let $m_n(x):=\mathbb E N_n=\int_{B(x,r_n)}\lambda(y)\,dy$.
By continuity of $\lambda$,
\begin{equation}
\label{eq:mn_asymp}
m_n(x)=\lambda(x)\,v_d r_n^d + o(r_n^d)\qquad (n\to\infty).
\end{equation}

Fix $t\in[0,1]$ and consider $\phi_n(t):=\mathbb E(1-t)^{N_n}$.
For each integer $k\ge 0$, the function $f_k(t)=(1-t)^k$ satisfies
$f_k(0)=1$, $f_k'(0)=-k$, and $f_k''(t)=k(k-1)(1-t)^{k-2}\ge 0$ for $t\in[0,1]$.
Hence by Taylor's theorem with remainder and the bound $0\le (1-\theta t)^{k-2}\le 1$,
\[
0\le (1-t)^k-(1-kt)\le \frac{t^2}{2}k(k-1)=\frac{t^2}{2}(k)_2
\qquad (t\in[0,1]).
\]
Taking expectations with $k=N_n$ yields
\begin{equation}
\label{eq:phi_expansion}
\phi_n(t)=\mathbb E(1-t)^{N_n}
=1-t\,\mathbb EN_n + O\big(\mathbb E(N_n)_2\big),
\qquad \text{uniformly in }t\in[0,1].
\end{equation}
By the small-ball bound  \eqref{eq:smallball},
$\mathbb E(N_n)_2 = O(r_n^{2d})$ as $n\to\infty$.

Since $\bar W_n(x)$ is a sum of $(n-1)$ i.i.d.\ copies of $N_n$,
\[
\mathbb E(1-t)^{\bar W_n(x)} = \phi_n(t)^{\,n-1}.
\]
Combining \eqref{eq:phi_expansion} and \eqref{eq:mn_asymp} gives
\[
\phi_n(t)=1-t\,\lambda(x)\,v_d r_n^d + o(r_n^d) + O(r_n^{2d}).
\]
Multiplying the error term by $(n-1)$, note that
$(n-1)r_n^{2d}=(nr_n^d)\,r_n^d\to 0$ because $nr_n^d$ is bounded and $r_n^d\to 0$.
Therefore,
\[
\phi_n(t)^{\,n-1}
=\Big(1-t\,\lambda(x)\,v_d r_n^d + o(1/n)\Big)^{n-1}
\longrightarrow \exp\{-t\,\tau\lambda(x)\},
\]
which is the alternate p.g.f.\ of $\mathrm{Poi}(\tau\lambda(x))$ at $t$. This proves the claim.
\end{proof}

We now show that $Z_n(u)$ from Lemma \ref{lem:loglin} concentrates around its mean. The key input is locality:
changing one component $X_i$ can only affect (i) the summands corresponding to points of $X_i$,
and (ii) summands corresponding to points within distance $r_n$ of $X_i$ (because only those
neighbour counts can change).

\begin{lemma}[Concentration]
\label{lem:var}
Fix $u\in\mathcal V$ with compact support $K$.
If $nr_n^d$ is bounded, then $\mathrm{Var}(Z_n(u))=O(1/n)$.
\end{lemma}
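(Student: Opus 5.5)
We prove the bound with the Efron--Stein inequality, using the independent components $X_1,\dots,X_n$ as the coordinates. Write $Z_n=Z_n(u)=F(X_1,\dots,X_n)$. Let $X_i'$ be an independent copy of $X_i$, and let $Z_n^{(i)}$ be $Z_n$ with $X_i$ replaced by $X_i'$. Efron--Stein gives
\[
\mathrm{Var}(Z_n)\le \tfrac12\sum_{i=1}^n \mathbb E\big(Z_n-Z_n^{(i)}\big)^2=\tfrac n2\,\mathbb E\big(Z_n-Z_n^{(1)}\big)^2
\]
by exchangeability. It therefore suffices to show $\mathbb E(Z_n-Z_n^{(1)})^2=O(n^{-2})$.

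\textbf{Locality bound.} Each summand satisfies $0\le z_n(x)\le a_\ast/n$, and it vanishes unless $x\in K$. Replacing $X_1$ by $X_1'$ changes only three kinds of summands. First, summands at points of $X_1\cap K$ are removed. Second, summands at points of $X_1'\cap K$ are added. Third, summands at points $x\in (S_n-X_1)\cap K$ may change, but only if $B(x,r_n)$ meets $X_1$ or $X_1'$. Each changed summand contributes at most $a_\ast/n$ in absolute value. Hence
\[
|Z_n-Z_n^{(1)}|\le \frac{a_\ast}{n}\Big(X_1(K)+X_1'(K)+\sum_{y\in (X_1+X_1')\cap K^{+r_n}} \bar S_n\big(B(y,r_n)\big)\Big),
\]
where $\bar S_n:=X_2+\cdots+X_n$ is independent of $(X_1,X_1')$. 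Squaring and using $(a+b+c)^2\le 3(a^2+b^2+c^2)$, it remains to bound two kinds of second moments uniformly in $n$.

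\textbf{Moment estimates.} The first kind is $\mathbb E X(K)^2=\mathbb EX(K)+\mathbb E(X(K))_2$, which is finite by local boundedness of $\lambda$ and $\rho^{(2)}$. The second kind is $\mathbb E\big(\sum_{y\in X_1\cap K'}\bar S_n(B(y,r_n))\big)^2$ with $K':=K^{+1}$. We condition on $X_1$ and use independence of $\bar S_n$. Expanding the square gives
\[
\sum_{y,y'\in X_1\cap K'} \mathbb E\big[\bar S_n(B(y,r_n))\,\bar S_n(B(y',r_n))\big].
\]
Each factor's moments are controlled by Lemma~\ref{lem:localpois}-type estimates. Since $\bar S_n$ is a sum of $n-1$ i.i.d.\ copies, for any balls $B,B'$ of radius $r_n$ centred in $K'$,
\[
\mathbb E\bar S_n(B)\bar S_n(B')\le (n-1)\,\mathbb E X(B)X(B') + (n-1)^2\,\mathbb EX(B)\,\mathbb EX(B').
\]
We bound $\mathbb EX(B)X(B')\le \mathbb E X(B) + \mathbb E(X(B))_2^{1/2}\mathbb E(X(B'))_2^{1/2}+\dots$, or more simply by $\mathbb E X(B\cap B')+\int_B\int_{B'}\rho^{(2)}$. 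This is $O(r_n^d)+O(r_n^{2d})$. The second term is $O((nr_n^d)^2)=O(1)$. Hence the expectation is $O(1)$ uniformly, since $nr_n^d$ is bounded, and the total is bounded by $C\,\mathbb E X_1(K')^2<\infty$.

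\textbf{Conclusion and main obstacle.} Combining the estimates gives $\mathbb E(Z_n-Z_n^{(1)})^2\le C/n^2$, and hence $\mathrm{Var}(Z_n)\le Cn/(2n^2)=O(1/n)$. The main obstacle is the third group of affected summands, the neighbours of the resampled component. Its count involves the product of the local density of $X_1$ with the neighbourhood counts of the independent rest $\bar S_n$. Making the second-moment bound uniform requires the second-order regularity assumption together with boundedness of $nr_n^d$, which keeps $\mathbb E\bar S_n(B(y,r_n))^2$ bounded uniformly in $y\in K'$.
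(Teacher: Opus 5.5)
Your proposal is correct and follows the paper's proof. Both use Efron--Stein over the i.i.d.\ components, exchangeability, the same locality bound, and second-moment estimates that rely on independence of $X_2,\dots,X_n$ from $(X_1,X_1')$ together with $\sup_n nr_n^d<\infty$. The only difference is minor: you dominate the neighbour term by the overcounting sum $\sum_y \bar S_n(B(y,r_n))$ and expand it into pairs, whereas the paper counts $S_{-1}$ on the union $D_n=K\cap(X_1\cup X_1')^{+r_n}$ and bounds $|D_n|$ by covering it with balls of radius $r_n$.
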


\begin{proof}
Let $X_1',\dots,X_n'$ be i.i.d.\ copies of $X$, independent of everything, and let
$Z_n^{(i)}$ be defined as $Z_n(u)$ but with $X_i$ replaced by $X_i'$.
By Efron--Stein \cite{EfronStein1981},
\begin{equation}
\label{eq:efron_stein}
\mathrm{Var}(Z_n(u))\le \frac12\sum_{i=1}^n \mathbb E\big(Z_n(u)-Z_n^{(i)}\big)^2.
\end{equation}
By exchangeability it suffices to bound the $i=1$ term.

Write $C:=\|u\|_\infty a_\ast$ and recall
\[
Z_n(u)=\frac1n\sum_{x\in S_n\cap K} u(x)\,a\big(N_{r_n}(x,S_n)\big),\qquad 0\le u\le \|u\|_\infty,\quad 0\le a\le a_\ast.
\]
Changing $X_1$ to $X_1'$ affects only (i) summands corresponding to points of $X_1\cup X_1'$ in $K$,
and (ii) summands corresponding to points of $S_n$ lying within distance $r_n$ of a point of $X_1\cup X_1'$
(because only then the neighbour count can change). Hence, with
\[
D_n:=K\cap \big(X_1\cup X_1'\big)^{+r_n},
\qquad S_{-1}:=\sum_{i=2}^n X_i,
\]
we have the deterministic bound
\begin{equation}\label{eq:delta_bound_short}
\big|Z_n(u)-Z_n^{(1)}\big|
\le \frac{C}{n}\Big(X_1(K)+X_1'(K)+S_{-1}(D_n)\Big).
\end{equation}

We now bound the second moment of the right-hand side of \eqref{eq:delta_bound_short}.
Fix $n$ large so that $r_n\le 1$, hence $K^{+r_n}\subset K^{+1}$.
By local boundedness of $\lambda$ and $\rho^{(2)}$ on $K^{+1}$, there is $M<\infty$ such that for every
bounded Borel $B\subset K^{+1}$,
\begin{equation}
\label{eq:boundedset_second_moment}
\mathbb E X(B)^2\le M(|B|+|B|^2).
\end{equation}
Moreover, $D_n\subset K$ is covered by at most $X_1(K^{+1})+X_1'(K^{+1})$ balls of radius $r_n$, so
\[
|D_n|\le v_d r_n^d\big(X_1(K^{+1})+X_1'(K^{+1})\big).
\]
Conditioning on $D_n$ and using independence of $X_2,\dots,X_n$ together with boundedness of $\mathbb E X(B)^2,$
\[
\mathbb E\!\left[S_{-1}(D_n)^2\,\middle|\,D_n\right]
\le c_1\,n\,|D_n|+c_2\,n^2\,|D_n|^2
\]
for constants $c_1,c_2<\infty$ depending only on local bounds on $K^{+1}$.
Taking expectations and using $\sup_n(nr_n^d)<\infty$ gives
$\sup_n\mathbb E[S_{-1}(D_n)^2]<\infty$.
 Together with \eqref{eq:delta_bound_short} and \eqref{eq:boundedset_second_moment}
,
this yields
\[
\mathbb E\big(Z_n(u)-Z_n^{(1)}\big)^2=O(n^{-2}).
\]  Hence, by \eqref{eq:efron_stein},
\[
\mathrm{Var}(Z_n(u))\le \frac12\,n\cdot O(n^{-2})=O(1/n).
\]

\end{proof}

We now identify the limit of $\mathbb E Z_n(u)$. This is the step where reduced Palm calculus
enters.

\begin{lemma}[Limit of the mean]
\label{lem:mean}
For each $u\in\mathcal V$,
\[
\mathbb{E}Z_n(u)\ \longrightarrow\ \Lambda(u) := \int_{\mathbb{R}^d} u(x)\,\lambda(x)\,\alpha(\tau\lambda(x))\,dx.
\]
\end{lemma}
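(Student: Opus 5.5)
By exchangeability and linearity, the mean splits over the $n$ components, so
\[
\mathbb E Z_n(u)=\frac1n\sum_{i=1}^n\mathbb E\sum_{x\in X_i}u(x)\,a\big(N_{r_n}(x,S_n)\big)=\mathbb E\sum_{x\in X_1}u(x)\,a\big(N_{r_n}(x,S_n)\big).
\]
For $x\in X_1$ the neighbour count decomposes as
\[
N_{r_n}(x,S_n)=N_{r_n}(x,X_1)+\bar W_n(x),
\]
where $\bar W_n(x)=\sum_{i\ge2}X_i(B(x,r_n))$ is independent of $X_1$, exactly as in Lemma~\ref{lem:localpois}. (Simplicity of $S_n$ a.s. follows from the absolute continuity of $\mu$ and independence, so the count is well defined.)

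\textbf{Step 1: discard the self-interaction.} Since $a$ is bounded, $|a(j+k)-a(k)|\le 2a_\ast\mathbf 1\{j\ge1\}$. Hence replacing $N_{r_n}(x,S_n)$ by $\bar W_n(x)$ costs at most
\[
2a_\ast\|u\|_\infty\,\mathbb E\sum_{x\in X_1\cap K}N_{r_n}(x,X_1)=2a_\ast\|u\|_\infty\int_K\int_{B(x,r_n)}\rho^{(2)}(x,y)\,dy\,dx=O(r_n^d)\to0,
\]
using local boundedness of $\rho^{(2)}$ on $K^{+1}$. This is where the second-order assumption is used.

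\textbf{Step 2: apply Campbell's formula.} Because $\bar W_n(\cdot)$ is independent of $X_1$, Fubini and Campbell's formula (equivalently, the reduced Palm calculus of $X_1$ combined with independence, with no Palm distribution of $X_1$ actually needed) give
\[
\mathbb E\sum_{x\in X_1}u(x)\,a(\bar W_n(x))=\int u(x)\lambda(x)\,g_n(x)\,dx,\qquad g_n(x):=\mathbb E\,a(\bar W_n(x)).
\]
One should check joint measurability of $(x,\omega)\mapsto\bar W_n(x)$; this holds since balls vary continuously.

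\textbf{Step 3: pass to the limit.} Lemma~\ref{lem:localpois} gives $\bar W_n(x)\Rightarrow\mathrm{Poi}(\tau\lambda(x))$. The variables are integer-valued, so this is convergence of point masses, and any bounded $a$ satisfies $g_n(x)\to\alpha(\tau\lambda(x))$ pointwise. The Poisson weak limit also gives tightness, so truncating $a$ at a large level handles the unbounded support of $a$. Dominated convergence with dominating function $\|u\|_\infty a_\ast\lambda\mathbf 1_K$, which is integrable since $\lambda$ is locally bounded, yields $\mathbb E Z_n(u)\to\Lambda(u)$. When $\tau=0$ the limit is $\mathrm{Poi}(0)=\delta_0$ and the formula is unchanged.

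The main obstacle is Step 1, justifying that the point's own process contributes negligibly to its neighbour count. This needs the second factorial moment bound near the diagonal, namely the $\rho^{(2)}$ integral over pairs at distance at most $r_n$. I would check carefully that local boundedness of $\rho^{(2)}$ off the diagonal suffices here, since only the pair measure enters and the diagonal has Lebesgue measure zero. The rest reduces to Lemma~\ref{lem:localpois} and dominated convergence.
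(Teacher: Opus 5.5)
Your proof is correct. It reaches the same decomposition as the paper: the point's own component, plus the $n-1$ independent others handled by Lemma~\ref{lem:localpois}, followed by dominated convergence. The route, however, is more elementary.

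The paper applies the reduced Campbell--Mecke formula to $S_n$ itself and invokes the Palm decomposition $S^!_{n,x}\stackrel{d}{=}X^!_x+\sum_{i\ge2}X_i$ for the superposition. It then removes the own-process term pointwise in $x$ via $\mathbb E X^!_x(B(x,r_n))=\lambda(x)^{-1}\int_{B(x,r_n)}\rho^{(2)}(x,y)\,dy$, with dominated convergence absorbing both error terms.

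You instead split the sum by component using exchangeability. You bound the self-interaction globally through the second factorial moment measure, $\int_K\int_{B(x,r_n)}\rho^{(2)}(x,y)\,dy\,dx=O(r_n^d)$. After that you need only the first-order Campbell formula for $X_1$ and the independence of the random field $\bar W_n(\cdot)$ from $X_1$.

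Your version never uses Palm distributions. So you do not have to justify the superposition Palm identity, which the paper asserts from exchangeability. Nor do you need the pointwise Palm moment identity, which strictly holds only for almost every $x$ (harmless there, but unremarked). Your self-interaction bound is also uniform rather than pointwise. In effect, your computation proves the Palm decomposition that the paper takes as known.

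The paper's version is shorter, and it makes the picture ``a typical point sees a Poisson neighbourhood'' explicit. That formulation extends most directly to other local functionals of the Palm configuration.

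One small remark: the truncation step in your Step~3 is harmless but unnecessary. For $\mathbb N_0$-valued variables, convergence in distribution already gives total-variation convergence, so any bounded $a$ passes to the limit directly.
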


\begin{proof}
Write
\[
p_n(x,S_n)=\frac{1}{n}\,a\big(N_{r_n}(x,S_n)\big),
\qquad
Z_n(u)=\sum_{x\in S_n}u(x)p_n(x,S_n).
\]
 Let $S^!_{n,x}$ denote a reduced Palm version of $S_n$ at $x$.  By the reduced Campbell--Mecke formula for simple point processes (see, e.g.,
\cite[Corollary~3.1.14]{BaccelliBlaszczyszynKarray2024};
see also \cite[Ch.~13 and Ch.~15]{DaleyVereJones2008Vol2}),
\[
\mathbb E Z_n(u)
=\frac1n\,\mathbb E\sum_{x\in S_n}u(x)\,a\big(N_{r_n}(x,S_n)\big)
=\int_{\mathbb R^d} u(x)\,\lambda(x)\,
 \mathbb E\,a\big(S^!_{n,x}(B(x,r_n))\big)\,dx.
\]

Since $S_n=X_1+\cdots+X_n$ is a superposition of $n$ i.i.d.\ components,  exchangeability implies that
\[
 S^!_{n,x}\ \stackrel{d}{=}\ X^!_x+\sum_{i=2}^n X_i,
\]
where $X^!_x$ is a reduced Palm version of $X$ at $x$, and $X_2,X_3,\dots$ are i.i.d.\ copies of $X$,
independent of $X^{!}_x$. 
Consequently,
\[
 S^!_{n,x}(B(x,r_n))
\ \stackrel{d}{=}\ U_n(x)+\bar W_n(x),
\qquad
U_n(x):=X^!_x(B(x,r_n)),\qquad
\bar W_n(x):=\sum_{i=2}^n X_i(B(x,r_n)).
\]

Using the definition of the reduced Palm distribution together with the second-order product density,
\[
\mathbb E U_n(x)
= \mathbb E X^!_x(B(x,r_n))
=\frac{1}{\lambda(x)}\int_{B(x,r_n)}\rho^{(2)}(x,y)\,dy
=O(r_n^d).
\]
Hence
\[
\mathbb P\big(U_n(x)\ge 1\big)\le \mathbb E U_n(x)\to 0.
\]
Since $a$ is bounded, it follows that
\begin{equation}
\label{eq:own_negligible}
\Big| 
\mathbb E\,a\big(S^!_{n,x}(B(x,r_n))\big)
-\mathbb E\,a\big(\bar W_n(x)\big)
\Big|
\le 2a_\ast\,\mathbb P\big(U_n(x)\ge 1\big)\longrightarrow 0.
\end{equation}

By Lemma~\ref{lem:localpois},
\(
\bar W_n(x)\Rightarrow \mathrm{Poi}\big(\tau\lambda(x)\big).
\)
Since $a$ is bounded on $\mathbb N_0$  and every function on $\mathbb N_0$ is continuous, convergence in distribution yields
\[
\mathbb E\,a\big(\bar W_n(x)\big)
\longrightarrow
\mathbb E\,a\big(\mathrm{Poi}(\tau\lambda(x))\big)
=\alpha(\tau\lambda(x)).
\]
Combining this with \eqref{eq:own_negligible}, we obtain
\[
 \mathbb E\,a\big(S^!_{n,x}(B(x,r_n))\big)
\longrightarrow
\alpha(\tau\lambda(x))
\qquad\text{for each fixed }x.
\]

Finally, since $u$ has bounded support and $\lambda$ is locally bounded, the integrand is dominated by
\(
\|u\|_\infty\,\lambda(x)\,a_\ast\,\mathbf 1_K(x),
\)
which is integrable. Therefore, by dominated convergence,
\[
\mathbb E Z_n(u)
=\int_{\mathbb R^d} u(x)\lambda(x)\,  \mathbb E\,a\big(S^!_{n,x}(B(x,r_n))\big)\,dx
\longrightarrow
\int_{\mathbb R^d} u(x)\lambda(x)\alpha(\tau\lambda(x))\,dx
=\Lambda(u).
\]
\end{proof}

\begin{proof}[Proof of Theorem~\ref{thm:main}]
Fix $u\in\mathcal V$ and apply Lemma~\ref{lem:loglin} with
$p_n(x,S_n)=a(N_{r_n}(x,S_n))/n$. Since $0\le u\le 1$ on its support and $a\le a_\ast$,
we have $\sup_{x,\varphi}u(x)p_n(x,\varphi)\le a_\ast/n\to 0$, so the lemma applies.
By Lemma~\ref{lem:var}, $\mathrm{Var}(Z_n(u))=O(1/n)\to 0$.
By Lemma~\ref{lem:mean}, $\mathbb E Z_n(u)\to \Lambda(u)$.
Hence $Z_n(u)\to \Lambda(u)$ in $L^2$, in particular in probability.
Since $z_n(x)\le a_\ast/n$ and $Q_n(u)=\sum z_n(x)^2$,
\[
Q_n(u)\le \frac{a_\ast}{n}\sum_{x\in S_n} z_n(x)=\frac{a_\ast}{n}Z_n(u).
\]
Taking expectations and using Lemma~\ref{lem:mean} gives
$\mathbb E Q_n(u)\le \frac{a_\ast}{n}\mathbb E Z_n(u)=O(1/n)\to 0$.
In particular, $Q_n(u)\to 0$ in $L^1$ and hence in probability.
By the exponential inequalities in Lemma~\ref{lem:loglin},
\[
\mathbb E e^{-Z_n(u)-2Q_n(u)} \le A_{Y_n}(u)\le \mathbb E e^{-Z_n(u)}.
\]
Because $0\le e^{-Z_n(u)}\le 1$ and $Z_n(u)\to \Lambda(u)$ in probability, the family
$\{e^{-Z_n(u)}\}$ is uniformly integrable, hence
\(
\mathbb E e^{-Z_n(u)}\longrightarrow e^{-\Lambda(u)}.
\)
Moreover,
\[
0\le \mathbb E e^{-Z_n(u)}-\mathbb E e^{-Z_n(u)-2Q_n(u)}
=\mathbb E\big[e^{-Z_n(u)}(1-e^{-2Q_n(u)})\big]
\le \mathbb E(1-e^{-2Q_n(u)})\le 2\mathbb E Q_n(u)\to 0.
\]
Therefore $A_{Y_n}(u)\to e^{-\Lambda(u)}$.

Finally, $e^{-\Lambda(u)}=\exp\{-\int u(x)\tilde\lambda(x)\,dx\}$ is the alternate p.g.fl.\ of an
inhomogeneous Poisson process with intensity density $\tilde\lambda(x)=\lambda(x)\alpha(\tau\lambda(x))$.

Since this convergence holds for every $u\in\mathcal V$, it  holds in particular for continuous
$u$ with bounded support, and hence for the corresponding usual p.g.fl.\ $G_\xi(h)=A_\xi(1-h)$.
By \cite[Proposition~11.1.VIII(iii)]{DaleyVereJones2008Vol2}, this identifies the weak limit of $Y_n$
as that Poisson process, completing the proof.
\end{proof}
\section*{4.\ Discussion and an illustration}

Theorem~\ref{thm:main} shows that the dependent neighbour-count thinning leaves a Poisson limit,
but with a \emph{nonlinear intensity transformation}
\(
\tilde\lambda(x)=\lambda(x)\,\alpha(\tau\lambda(x)),
\) \(
\alpha(t)=\mathbb E\big[a(\mathrm{Poi}(t))\big].
\)
Hence, for fixed $a(\cdot)$ and $\tau$, the effect of interactions is encoded by the scalar response map
$f(\ell):=\ell\,\alpha(\tau\ell)$ acting pointwise on $\ell=\lambda(x)$.
To illustrate possible distortions, we record three concrete choices of bounded neighbour rules $a(k)$
and their resulting $\alpha(\cdot)$.

 \noindent
\textbf{Example 1 (isolated-point rule).}
Let $a(k)=\mathbf 1_{\{k=0\}}$.
Then for $K\sim\mathrm{Poi}(t)$,
\[
\alpha(t)=\mathbb E[a(K)]=\mathbb P(K=0)=e^{-t},
\qquad\text{so}\qquad
\tilde\lambda(x)=\lambda(x)\,e^{-\tau\lambda(x)}.
\]
Thus regions where $\lambda(x)$ is large are strongly suppressed.

 \noindent
\textbf{Example 2 (reciprocal crowding penalty).}
Let $a(k)=1/(k+1)$, which is bounded and decreases with the local neighbour count.
Using the identity $\frac{1}{k+1}=\int_0^1 s^k\,ds$ and $K\sim\mathrm{Poi}(t)$,
\[
\alpha(t)=\mathbb E\Big[\frac{1}{K+1}\Big]
=\int_0^1 \mathbb E[s^K]\,ds
=\int_0^1 \exp(t(s-1))\,ds
=\frac{1-e^{-t}}{t},
\]
with the continuous extension $\alpha(0)=1$.
Consequently,
\[
\tilde\lambda(x)=\lambda(x)\,\frac{1-e^{-\tau\lambda(x)}}{\tau\lambda(x)}
=\frac{1-e^{-\tau\lambda(x)}}{\tau},
\]
so $\tilde\lambda(x)$ saturates at level $1/\tau$ as $\lambda(x)\to\infty$.

 \noindent
\textbf{Example 3 (oscillatory mode).}
Let $a(k)=1+\varepsilon\cos(\omega k)$ with $\varepsilon\in(0,1)$ and $\omega\in(0,\pi)$, so that $a(\cdot)$ is bounded and nonmonotone.
For $K\sim\mathrm{Poi}(t)$ one computes
\[
\alpha(t)=\mathbb E[a(K)]
=1+\varepsilon\,\exp\!\big(t(\cos\omega-1)\big)\cos\!\big(t\sin\omega\big),
\]
hence $\tilde\lambda(x)=\lambda(x)\alpha(\tau\lambda(x))$ exhibits a damped oscillatory modulation as a function of $\tau\lambda(x)$.

\begin{figure}[ht]
\centering
\includegraphics[width=0.7\textwidth]{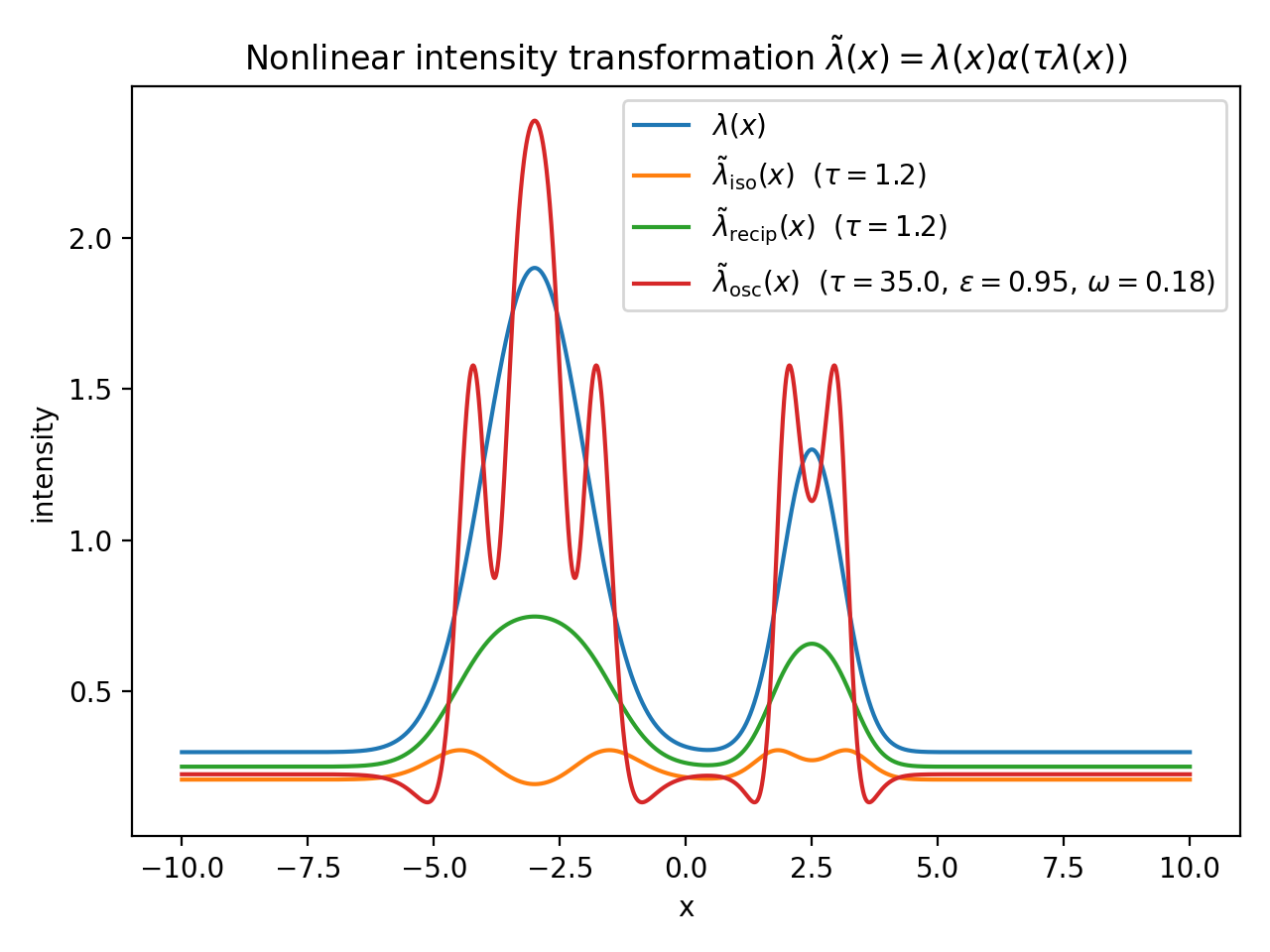}
\caption{Illustration of the nonlinear intensity transformation
$\tilde\lambda(x)=\lambda(x)\alpha(\tau\lambda(x))$ in one dimension for the baseline
$\lambda(x)=0.3+1.6\exp(-(x+3)^2/2)+1.0\exp(-(x-2.5)^2/(2\cdot 0.6^2))$.
We plot $\lambda$ together with (i) $\tilde\lambda_{\mathrm{iso}}(x)=\lambda(x)e^{-\tau\lambda(x)}$
(from $a(k)=\mathbf 1_{\{k=0\}}$, with $\tau=1.2$), (ii)
$\tilde\lambda_{\mathrm{recip}}(x)=(1-e^{-\tau\lambda(x)})/\tau$
(from $a(k)=1/(k+1)$, with $\tau=1.2$), and (iii) 
$\tilde\lambda_{\mathrm{osc}}(x)=\lambda(x)\alpha(\tau\lambda(x))$
from $a(k)=1+\varepsilon\cos(\omega k)$ with parameters $(\tau,\varepsilon,\omega)=(35,0.95,0.18)$.
}
\label{fig:tilde_examples}
\end{figure}

\section*{Acknowledgment}
The author was supported by the NSFC, China grant 12201241 ``Stochastic
flow of Brownian particles with coalescence''.


\begin{thebibliography}{99}

\bibitem{Grigelionis1963Poisson}
B.~Grigelionis,
On the convergence of sums of random step processes to a Poisson process,
\emph{SIAM Theory of Probability \& its Applications} \textbf{8} (2) (1963) 177--182.
\newblock \href{https://doi.org/10.1137/1108017}{https://doi.org/10.1137/1108017}.

\bibitem{Aldridge2026ThinProcesses}
M.~Aldridge,
The law of thin processes: A law of large numbers for point processes,
\emph{Statistics \& Probability Letters} \textbf{232} (2026) 110653.
\newblock \href{https://doi.org/10.1016/j.spl.2026.110653}{https://doi.org/10.1016/j.spl.2026.110653}.




\bibitem{TeichmannBallaniBoogaart2013} J.~Teichmann, F.~Ballani, K.G.~van den Boogaart, Generalizations of Mat\'ern's hard-core point processes, \emph{Spatial Statistics} \textbf{3} (2013) 33--53. \newblock \href{https://doi.org/10.1016/j.spasta.2013.02.001}{https://doi.org/10.1016/j.spasta.2013.02.001}. 

 

\bibitem{Serfozo1984} R.~Serfozo, Thinning of cluster processes: convergence of sums of thinned point processes, \emph{Mathematics of Operations Research} \textbf{9} (4) (1984) 522--533. \newblock \href{https://doi.org/10.1287/moor.9.4.522}{https://doi.org/10.1287/moor.9.4.522}. 



\bibitem{Schuhmacher2009}
D.~Schuhmacher,
Distance estimates for dependent thinnings of point processes with densities,
\emph{Electronic Journal of Probability} \textbf{14} (2009) 1080--1116.
\newblock \href{https://doi.org/10.1214/EJP.v14-643}{https://doi.org/10.1214/EJP.v14-643}.

\bibitem{LastPenrose2018}
G.~Last, M.~Penrose,
\emph{Lectures on the Poisson Process},
Cambridge University Press, 2018.


\bibitem{DaleyVereJones2008Vol2}
D.J.~Daley, D.~Vere-Jones,
\emph{An Introduction to the Theory of Point Processes. Vol.~II: General Theory and Structure},
2nd ed., Springer, New York, 2008.
\newblock \href{https://doi.org/10.1007/978-0-387-49835-5}{https://doi.org/10.1007/978-0-387-49835-5}.





\bibitem{EfronStein1981}
B.~Efron, C.~Stein,
The jackknife estimate of variance,
\emph{The Annals of Statistics} \textbf{9} (3) (1981) 586--596.
\newblock \href{https://doi.org/10.1214/aos/1176345462}{https://doi.org/10.1214/aos/1176345462}.

\bibitem{BaccelliBlaszczyszynKarray2024}
F.~Baccelli, B.~B\l aszczyszyn, M.K.~Karray,
\emph{Random Measures, Point Processes, and Stochastic Geometry},
Inria, 2024.
\newblock HAL preprint:
\href{https://hal.science/hal-02460214v2}{https://hal.science/hal-02460214v2}.






\end{thebibliography}
\end{document}